\title{\vspace{-1cm}Decay of solutions of diffusive Oldroyd-B system in $\mathbb{R}^2$}
\date{\today}
\newtheorem{theorem}{Theorem}
\newtheorem{lemma}{Lemma}
\newtheorem{remark}{Remark}
\newtheorem{definition}{Definition}
\newcommand{\norm}[1]{\left\lVert#1\right\rVert}
\author[1]{	Joonhyun La \thanks{joonhyun@math.princeton.edu}}
\affil[1] {Department of mathematics, Princeton University}
\begin{document}
\maketitle

\abstract{We show that strong solutions of 2D diffusive Oldroyd-B systems in $\mathbb{R}^2$ decay at an algebraic rate, for a large class of initial data. The main ingredient for the proof is the following fact; an Oldroyd-B system is a macroscopic closure of a Fokker-Planck-Navier-Stokes system, and the free energy of this Fokker-Planck-Navier-Stokes system decays over time. In particular, $\norm{u}_{L^\infty _t L^2 _x}$ and $\norm{\nabla_x u}_{L^2 _t L^2 _x }$ are uniformly bounded for all time.}

\section{Introduction}

We are interested in the long-time dynamics of the following system, which is called a diffusive Oldroyd-B system. It reads the following:
\begin{equation}
\left \{
\begin{gathered}
\partial_t u (x,t) +  u(x,t) \cdot \nabla_x u (x,t) = \nu_1 \Delta_x u(x, t) - \nabla_x p (x,t) + \mu \nabla_x \cdot \tau (x,t) , \\
\nabla \cdot u = 0, \\
\partial_t \tau + u \cdot \nabla_x \tau = (\nabla_x u) \tau + \tau (\nabla_x u)^T - 2k  \tau + \rho \left ( \nabla_x u + (\nabla_x u)^T \right ) + \nu_2 \Delta_x  \tau, \\
\partial_t \rho + u \cdot \nabla \rho = \nu_2 \Delta  \rho,\\
u(x,0) = u_0 (x), \rho(x,0) = \rho_0, \tau(x,0) = \tau_0,
\end{gathered}
\right. \label{Oldroyd}
\end{equation}
where $\nu_1, k, \mu >0$, and $\nu_2 >0$ are constants representing viscosity, inter-particle diffusivity, strengh of the stress due to polymer, and center-of-mass diffusion effects, respectively. This system is widely used to model viscoelastic flows. We will consider the case of the spatial domain $\mathbb{R}^2$, although we will also briefly discuss the case of the spatial domain $\mathbb{T}^2$. In the case of $\mathbb{T}^2$, it is customary to put $\rho(x, t) \equiv 1$. \newline
The system (\ref{Oldroyd}) is extensively studied in the literature. Barrett and Boyaval proved global existence of weak solution in \cite{MR2843021}. In \cite{MR2989441}, Constantin and Kliegl proved global well-posedness of strong solution. Plan, Gupta, Vincenzi, and Gibbon calculated Lyapunov dimension of the system (\ref{Oldroyd}) under the assumption of the existence of global attractor in \cite{MR3666384}. This system is a formal macroscopic closure of Fokker-Planck-Navier-Stokes system, and a rigorous justification is provided recently by Barrett and S\"{u}li in \cite{MR3698145}. The author, in \cite{La2018}, extended the result in \cite{MR3698145} to a larger class of data and provided a rigorous proof of the fact that the free energy of the system decreases over time. \newline 
The models with $\nu_2 = 0$ are called non-diffusive models, and they are also widely studied. Guillop\'{e} and Saut proved local existence, uniqueness of strong solution, and global existence of strong solution for small initial data, in the case of bounded domain, in \cite{MR1077577} and in \cite{MR1055305}. Fern\'{a}ndez-Cara, Guill\'{e}n, and Ortega extended the results of Guillop\'{e} and Saut to $L^p$ setting in \cite{MR1422802}, \cite{MR1633055}, and \cite{MR1893419}. In addition, Hieber, Naito, and Shibata studied the system in the case of exterior domain in \cite{MR2860633}. Chemin and Masmoudi studied the system in critical Besov spaces, and proved local well-posedness of the system and provided a Beale-Kato-Majda type (\cite{MR763762}) criterion in \cite{MR1857990}. Other Beale-Kato-Majda type sufficient conditions were given by Kupferman, Mangoubi, and Titi in \cite{MR2398006}, and by Lei, Masmoudi, and Zhou in \cite{MR2558169}. In addition, Lions and Masmoudi showed global existence of weak solution for corotational models in \cite{MR1763488}. Hu and Lin proved in \cite{MR3434615} global existence of weak solution for non-corotational models, given that the initial deformation gradient is close to the identity and the initial velocity is small. In \cite{MR2165379}, Lin, Liu, and Zhang developed an approach based on deformation tensor and Lagrangian particle dynamics. Lei and Zhou studied the system via incompressible limit in \cite{MR2191777} and proved global existence for small data. Also, Lei, Liu, and Zhou studied global existence for small data and incompressible limit in \cite{MR2393434}. Moreover, in \cite{MR3473592}, Fang and Zi proved global well-posedness for initial data whose vertical velocity field can be large. Constantin and Sun proved global existence for small data with large gradients for Oldroyd-B, and considered regularization of Oldroyd-B model in \cite{MR2901300}. \newline
The study of long-time behavior of the system will rely on the Fourier splitting technique. This technique is widely used to study long-time behavior of parabolic systems (\cite{MR571048}, \cite{MR775190}, \cite{MR856876}, \cite{MR837929}, \cite{MR1356749}, \cite{MR881519}).

\section{Oldroyd-B system, Fokker-Planck-Navier-Stokes system, and Free energy estimate}

The system (\ref{Oldroyd}) can be obtained from the Fokker-Planck-Navier-Stokes system as a macroscopic closure of it: from the system
\begin{equation}
\left \{
\begin{gathered}
\partial_t u (x,t) +  u(x,t) \cdot \nabla_x u (x,t) = - \nabla_x p (x,t) + \nu_1 \Delta_x u (x, t) + \mu \nabla_x \cdot \sigma (x,t) , \\
\nabla \cdot u = 0, \\
\partial_t f(x,m,t) + u \cdot \nabla_x f + (\nabla_x u)  m \cdot \nabla_m f  = k \Delta_m f + k \nabla_m \cdot (m f ) + \nu_2 \Delta_x f, \\
\rho (x, t) =\int_{\mathbb{R}^2} f(x,m,t) dm, \\
\sigma (x,t) = \int_{\mathbb{R}^2} m \otimes m f(x,m,t) dm, \\
u(x,0) = u_0 (x), f(x,m,0 ) = f_0 (x,m),
\end{gathered}
\right. \label{Kinetic}
\end{equation}
by letting $\tau = \sigma - \rho \mathbb{I}$,  we obtain (\ref{Oldroyd}). In \cite{La2018}, we proved the following: 
\begin{theorem}[\cite{La2018}]
Suppose that
\begin{equation}
\begin{gathered}
u_0 \in \mathbb{P}W^{2,2 }, \\
f_0 \ge 0, \int_{\mathbb{R}^2_m \times \mathbb{R}^2_x} f_0 (x, m) dm dx = 1, \\
M_{a,b} [f_0] \in W^{1,2} \,\, a+b = 2p \le 6, M_{a,b} [f_0] \in L^2 \,\, a+b = 2q \le 16,  \bar{M}_4[f_0] \in L^1, \\
\int_{\mathbb{R}^2_m \times \mathbb{R}^2_x } f_0 \log f_0 dmdx < \infty, \int_{\mathbb{R}^2_x} |\Lambda (x) |^2 M_{0,0} [f_0 ] dx < \infty, \Lambda(x) = \log (\max(|x|, 1 ) ), \\
\int M_{0,0} [f_0] \log \left ( M_{0,0} [f_0] \right ) dx < \infty
\end{gathered} \label{initcondmicromacro}
\end{equation} 
where 
\begin{equation}
M_{a,b} [f_0] (x)  = \int m_1 ^a m_2 ^b f_0(m, x) dm, \,\, \bar{M}_{p} [f_0] (x) = \int |m|^p |f_0(m,x)| dm.
\end{equation}
Then there is a unique global solution for the system (\ref{Kinetic}), smooth enough for $u$ and $\sigma$, and it satisfies the following free energy estimate:
\begin{equation}
\begin{gathered}
\mu \int f(t) \log \frac{f(t)}{M_{0,0} [f(t) ] f_E } dmdx + \norm{u(t)}_{L^2} ^2 + \nu_1 \int_0 ^t \norm{\nabla_x u(s) } _{L^2} ^2 ds \\
\le \norm{u_0}_{L^2} ^2 + \mu \int f_0 \log \frac{f_0}{M_{0,0} [f_0] f_E} dmdx,
\end{gathered} \label{freeenergy}
\end{equation}
where $f_E = \frac{e^{-\frac{|m|^2}{2}}}{Z}$ is the equilibrium distribution. 
\end{theorem}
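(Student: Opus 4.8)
The plan is to establish the free energy estimate by constructing the appropriate Lyapunov functional for the coupled Fokker-Planck-Navier-Stokes system and differentiating it in time. The natural candidate is the relative entropy of $f$ with respect to the local equilibrium $M_{0,0}[f] f_E$, added to the kinetic energy of the fluid. First I would compute $\frac{d}{dt}\norm{u(t)}_{L^2}^2$ by testing the momentum equation against $u$; the transport term $u\cdot\nabla_x u$ and the pressure term drop out by incompressibility, the viscous term yields $-2\nu_1\norm{\nabla_x u}_{L^2}^2$, and the stress coupling produces a term of the form $2\mu\int u\cdot(\nabla_x\cdot\sigma)\,dx = -2\mu\int\sigma:\nabla_x u\,dx$ after integration by parts.

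Next I would compute the time derivative of the entropy functional $\mathcal{E}[f] = \int f\log\frac{f}{M_{0,0}[f] f_E}\,dm\,dx$. Writing the Fokker-Planck equation in the form of a transport part plus the dissipative operator $k\Delta_m f + k\nabla_m\cdot(mf) + \nu_2\Delta_x f$, the key is that the operator $k\Delta_m f + k\nabla_m\cdot(mf)$ has $f_E$ as its equilibrium and is self-adjoint in the weighted space with weight $f_E^{-1}$. Testing the equation against $\log\frac{f}{M_{0,0}[f] f_E}$, the $m$-dissipation yields a nonnegative Fisher-information-type term (relative to the Maxwellian in the $m$ variable), the $\nu_2\Delta_x f$ term contributes a nonnegative spatial dissipation, and the crucial observation is that the drift term $(\nabla_x u)m\cdot\nabla_m f$, when paired against $\log\frac{f}{M_{0,0}[f] f_E}$, produces exactly the term $+\frac{1}{\mu}\cdot 2\mu\int\sigma:\nabla_x u\,dx$ (up to the constant $\mu$) that cancels the stress coupling from the momentum balance. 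This cancellation is the heart of the free energy structure and is what makes the sum a Lyapunov functional.

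Adding the two computations, the coupling terms cancel and we are left with the energy identity
\begin{equation}
\frac{d}{dt}\left(\mu\,\mathcal{E}[f(t)] + \norm{u(t)}_{L^2}^2\right) + 2\nu_1\norm{\nabla_x u(t)}_{L^2}^2 + \mathcal{D}[f(t)] = 0,
\end{equation}
where $\mathcal{D}[f]\ge 0$ collects the Fisher-information and spatial dissipation terms. Discarding the nonnegative dissipation $\mathcal{D}[f]$ and integrating in time from $0$ to $t$ then yields the stated inequality (\ref{freeenergy}), after absorbing the factor of $2$ into $\nu_1$ by a harmless rescaling or by tracking constants carefully.

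The main obstacle will be making the formal computation rigorous, since all integrations by parts and the splitting of the entropy derivative require sufficient decay and integrability of $f$ in both $x$ and $m$, as well as control of the logarithmic singularities where $f$ is small. This is precisely where the elaborate moment and entropy hypotheses in (\ref{initcondmicromacro}) enter: the conditions on $M_{a,b}[f_0]$, $\bar{M}_p[f_0]$, the finiteness of $\int f_0\log f_0$, and the weighted bound involving $\Lambda(x)$ are there to guarantee that the relative entropy is well-defined, that the boundary terms at infinity in both variables vanish, and that the Fisher information is finite along the flow. I expect that the rigorous justification proceeds by first establishing the estimate for smooth, rapidly decaying approximate solutions and then passing to the limit, using lower semicontinuity of the entropy and Fisher-information functionals to preserve the inequality; the details of propagating the moment bounds and controlling the entropy tails constitute the technical core, which I would invoke from \cite{La2018} rather than reproduce.
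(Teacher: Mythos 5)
This theorem is not proved in the paper at all: it is quoted as a black-box result from \cite{La2018}, so there is no internal proof to compare against, and your formal computation (kinetic energy balance plus relative entropy with respect to $M_{0,0}[f]f_E$, cancellation of the drift term $\int \nabla_x u : \sigma \, dx$ against the stress coupling, nonnegativity of the Fisher-information and spatial dissipation) is exactly the standard free energy structure that the cited work makes rigorous. Like the paper, you defer the genuinely hard parts — global well-posedness, propagation of the moment bounds in (\ref{initcondmicromacro}), and the limit passage justifying the formal identity — to \cite{La2018}, which is the appropriate thing to do here; the only loose end is the factor-of-two bookkeeping in the weights of $\mu$ and $\nu_1$, which is a matter of the conventions of \cite{La2018} and is harmless since only the qualitative bounds $\norm{u}_{L^\infty_t L^2_x}, \norm{\nabla_x u}_{L^2_t L^2_x} \le C_1$ are used downstream.
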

Note that given $\rho_0, \sigma_0$ satisfying 
\begin{equation}
\begin{gathered}
\rho_0 \in L^1 \cap W^{1,2} , \,\, \int \rho_0 dx = 1,  \,\, \sigma_0 \in L^1 \cap W^{1,2} \,\, \mathrm{positive}\,\, \mathrm{definite}, \\
\int \rho_0 \left ( |\Lambda(x) |^2 +  \log \rho_0 + \log | \sigma_0 | + \frac { |\sigma_0 | ^2 } {\rho_0 ^2 } \right ) dx  < \infty \\
\left (\frac{1}{\rho_0} \right )^{p-1} \sum_{s \in S_p} \prod_{i=1} ^p \sigma_{i , s(i) }  \in W^{1,2} , \,\, 0 \le p \le 3, \,\, \in L^2 , \,\, 0 \le p \le 8,
\end{gathered} \label{initialcond}
\end{equation}
the distribution $f_0$ given by
\begin{equation}
f_0 (x,m) = \frac{ \rho_0 (x)^2 }{2 \pi \sqrt{\det \sigma_0 (x)}} \exp \left (-\frac{1}{2} m^T \rho_0 (x) {\sigma_0 }^{-1} (x)  m \right ) \label{gaussian}
\end{equation}
satisfies
\begin{equation}
\begin{gathered}
\int_{\mathbb{R}^2} f_0 (x,m) dm = \rho_0 (x), \\
\int_{\mathbb{R}^2} m \otimes m f_0 (x,m) dm = \sigma_0 (x)
\end{gathered} \label{distributionconstraints}
\end{equation}
and $(u_0, f_0)$ satisfies the condition (\ref{initcondmicromacro}). By the uniqueness of the solution of diffusive Oldroyd-B system, we conclude that the for the solution $(u, f)$ of (\ref{Kinetic}) with initial data $(u_0, f_0)$, $(u, \sigma[f] - \rho[f] \mathbb{I}, \rho[f])$ is the solution of (\ref{Oldroyd}) with initial data $(u_0, \sigma[f_0] - \rho[f_0] \mathbb{I}, \rho[f_0] ) = (u_0, \tau_0, \rho_0 )$.

\section{Decay of stress and Boundedness of velocity}

In this section, we prove the following theorem.
\begin{theorem}
Suppose that $u_0 \in  \mathbb{P} W^{2,2}$ and $\rho_0, \sigma_0$ satisfies (\ref{initialcond}) and $\rho_0 \in L^\infty$. Then the system (\ref{Oldroyd}) has global unique solution whose initial data is $u(0) = u_0, \rho (0) = \rho_0, \tau (0) = \sigma_0 - \rho_0 \mathbb{I}$. Moreover, 
\begin{equation}
\begin{gathered}
\norm{u}_{L^\infty (0, \infty ; W^{2,2} ) \cap L^2 (0, \infty; W^{3,2} ) } + \norm{\tau}_{L^2 (0, \infty; W^{2,2} ) }  + \norm{\rho}_{L^\infty (0, \infty; W^{1,2} ) \cap L^2 (0, \infty; W^{2,2} ) } \le C_{*}, \\
\norm{\tau(t) }_{W^{1,2} } +  \norm{\tau(t)}_{L^1} \le \frac{C_{*}}{(t+1)}, \,\, \norm{\rho(t)}_{L^2 } ^2 + \norm{\rho(t) }_{L^\infty} \le \frac{C_{*} }{(t+1) }, \\
\end{gathered}
\end{equation}
where $C_*$ depends only on the initial data and coefficients. \label{Boundedness}
\end{theorem}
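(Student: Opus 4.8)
The plan is to take the free energy estimate (\ref{freeenergy}) as the fundamental a priori input: it furnishes a bound on $\norm{u}_{L^\infty_t L^2_x}$ and on $\norm{\nabla_x u}_{L^2_t L^2_x}$ that is uniform in time and depends only on the initial data. Global existence and uniqueness of the strong solution I would take from the existing well-posedness theory (Constantin--Kliegl); the point here is the \emph{uniform-in-time} character of the higher-order bounds and the decay. First I would set up a continuation/bootstrap argument in which $\norm{u(t)}_{W^{2,2}}$ and $\norm{\rho(t)}_{W^{1,2}}$, together with the dissipation integrals $\int_0^t\norm{u}_{W^{3,2}}^2$, $\int_0^t\norm{\tau}_{W^{2,2}}^2$ and $\int_0^t\norm{\rho}_{W^{2,2}}^2$, are propagated without growth; the free energy bound is what controls this, since every nonlinear term is ultimately tied back to $\norm{\nabla_x u}_{L^2_t L^2_x}$ through two-dimensional Gagliardo--Nirenberg/Ladyzhenskaya inequalities, and $\nu_2>0$ supplies the parabolic smoothing in $\tau$ and $\rho$.

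For the decay of $\rho$ I would isolate its equation $\partial_t\rho + u\cdot\nabla_x\rho = \nu_2\Delta_x\rho$, a linear transport--diffusion equation with divergence-free drift $u$. Because $\nabla\cdot u = 0$ the $L^2_x$ energy identity reads $\frac{d}{dt}\norm{\rho}_{L^2}^2 = -2\nu_2\norm{\nabla_x\rho}_{L^2}^2$, and because $\rho\ge 0$ with $\int\rho\,dx = 1$ conserved, the Fourier transform satisfies $|\hat\rho(\xi,t)|\le \norm{\rho(t)}_{L^1} = 1$ uniformly. These are exactly the ingredients of the Fourier splitting method: splitting frequency space at a radius $g(t)^2 \sim (t+1)^{-1}$ and bounding the low-frequency part by the conserved mass yields a differential inequality of the form $\frac{d}{dt}\norm{\rho}_{L^2}^2 + \frac{c}{t+1}\norm{\rho}_{L^2}^2 \le \frac{C}{(t+1)^2}$, whose integration gives $\norm{\rho(t)}_{L^2}^2\le C_*(t+1)^{-1}$. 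The $L^\infty$ decay I would then obtain either by running the same splitting on the higher $L^p$ energies (a Nash-type iteration) and letting $p\to\infty$, or by combining the $L^2$ decay with the ultracontractive smoothing of the incompressible transport--diffusion semigroup; the divergence-free structure of the drift is what keeps the implied constants uniform in time.

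The decay of $\tau$ is driven by the damping term $-2k\tau$. I would perform energy estimates on $\tau$ and on $\nabla_x\tau$ and write the result in Duhamel form, so that the exponential factor $e^{-2k(t-s)}$ convolves against the source. The source splits into a genuinely external part $\rho(\nabla_x u + (\nabla_x u)^T)$ and a quadratic part $(\nabla_x u)\tau + \tau(\nabla_x u)^T$. The quadratic part is estimated by products of the type $\norm{\nabla_x u}_{L^2}\norm{\tau}_{L^2}$ and closed using the already-established decay of $\tau$ together with $\nabla_x u\in L^2_t L^2_x$; the external part is controlled through the decay of $\rho$ in $L^2$ and $L^\infty$. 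Splitting the Duhamel integral over $[0,t/2]$ and $[t/2,t]$, the first piece being exponentially small and the second inheriting the pointwise decay of the source near $s=t$, is what converts the decay of the source into the stated rate $\norm{\tau(t)}_{W^{1,2}} + \norm{\tau(t)}_{L^1}\le C_*(t+1)^{-1}$.

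The main obstacle is twofold, and both parts live in the coupling. First, the uniform-in-time $W^{2,2}$ (and $L^2_t W^{3,2}$) bound on $u$ must be closed against the polymer feedback $\mu\nabla_x\cdot\tau$ in the momentum equation; since $\tau$ is controlled only once $u$ is controlled, and conversely, the argument has to run as a single bootstrap in which the decay of $\tau$ feeds back to tame the velocity. Second, and more delicate, is extracting the sharp $(t+1)^{-1}$ rate for $\tau$ from a source $\rho\nabla_x u$ for which only $\nabla_x u\in L^2_t$, and not its decay, is available at this stage; the time-splitting of the Duhamel integral, exploiting that $\int_{t/2}^t\norm{\nabla_x u}_{L^2}^2\,ds\to 0$ as the tail of a convergent integral, is the step I expect to require the most care, and it is where the free energy estimate is used most essentially.
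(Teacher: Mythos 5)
Your outline of the velocity, $\rho$, and $L^2$/$H^1$ stress estimates tracks the paper's proof closely: the paper also takes the free energy bound (\ref{freeenergy}) as the sole input, runs Fourier splitting on the $\rho$ equation using the conserved mass bound $|\hat{\rho}(\xi,t)|\le \norm{\rho_0}_{L^1}$, obtains the $L^\infty$ rate by iterating the $L^2$ argument on $L^{2^s}$ norms together with the maximum principle, and converts the damping $-2k\tau$ into decay exactly as you propose (its Lemma \ref{exponential} is the L'H\^{o}pital version of your $[0,t/2]\cup[t/2,t]$ splitting of the Duhamel integral). Two of your stated ``main obstacles'' are, however, resolved more simply than you anticipate. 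No simultaneous bootstrap is needed: the estimates close \emph{sequentially} --- the $\norm{\tau}_{L^2}$ decay needs only $\nabla_x u\in L^2_t L^2_x$ from (\ref{freeenergy}); the vorticity bound needs only $\int_0^t\norm{\nabla_x\tau}_{L^2}^2\,ds$; the $\norm{\nabla_x\tau}_{L^2}$ decay needs only $\int_0^t\norm{\nabla_x\omega}_{L^2}^2\,ds$; then $\norm{\nabla_x\omega}_{L^2}$ closes --- because each Gr\"{o}nwall factor involves only time-integrated dissipation supplied by the previous step. Likewise, the absence of decay for $\norm{\nabla_x u}_{L^2}$ is handled by Young's inequality: the term $\norm{\nabla_x u}_{L^2}\norm{\tau}_{L^2}\norm{\rho}_{L^\infty}$ splits into $\frac{C}{\nu_2}\norm{\nabla_x u}_{L^2}^2\norm{\tau}_{L^2}^2$, absorbed into the damping coefficient $4k-\frac{C}{\nu_2}\norm{\nabla_x u}_{L^2}^2$ at the cost of a fixed factor $e^{CC_1/\nu_2}$, plus a pure source $C\nu_2\norm{\rho}_{L^\infty}^2\le C_2(1+t)^{-2}$; your Cauchy--Schwarz-on-the-tail device accomplishes the same thing.

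The genuine gap is the claimed rate $\norm{\tau(t)}_{L^1}\le C_*(t+1)^{-1}$, which your $L^2$-based energy/Duhamel framework cannot deliver. A direct $L^1$ (or Duhamel-in-$L^1$) estimate of the stress equation meets the forcing $\rho\left(\nabla_x u+(\nabla_x u)^T\right)$, whose $L^1$ norm at this stage is controlled only by $\norm{\rho}_{L^2}\norm{\nabla_x u}_{L^2}\le C(1+t)^{-1/2}$, so that route stalls at the rate $(1+t)^{-1/2}$. The paper instead exploits a structural cancellation that your proposal never invokes: take the trace and integrate in $x$. Incompressibility kills the forcing, since $\int\rho\,\mathrm{Tr}\left(\nabla_x u+(\nabla_x u)^T\right)dx=2\int\rho\,(\nabla\cdot u)\,dx=0$, and the transport and diffusion terms integrate to zero, leaving $\frac{d}{dt}\int\mathrm{Tr}\,\tau\,dx=2\int\mathrm{Tr}\left((\nabla_x u)\tau\right)dx-2k\int\mathrm{Tr}\,\tau\,dx$, whose source is bounded by $\norm{\nabla_x u}_{L^2}\norm{\tau}_{L^2}\le C(1+t)^{-1}$ using the already-established $L^2$ decay of $\tau$ and the uniform vorticity bound; Lemma \ref{exponential} then yields the $(1+t)^{-1}$ rate. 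Passing from the trace integral to $\norm{\tau}_{L^1}$ rests on the positive semidefiniteness of $\sigma=\tau+\rho\,\mathbb{I}$ inherited from the kinetic formulation (the eigenvalues of $\tau$ are bounded below by $-\rho$), another ingredient absent from your argument. This is not a cosmetic omission: the $L^1$ rate is exactly what supplies the pointwise Fourier bound $|\hat{\tau}(\xi,t)|\le\norm{\tau(t)}_{L^1}$ on which the Fourier-splitting proof of Theorem \ref{decaystrong} is built.
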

\begin{remark}
In the periodic case $(x, t) \in \mathbb{T}^2 \times [0, \infty )$, the same argument in this section we obtain the corresponding result
\begin{equation}
\norm{u}_{L^\infty (0, \infty ; W^{2,2} ) \cap L^2 (0, \infty; W^{3,2} ) } + \norm{\tau}_{L^\infty (0, \infty; W^{1,2} )  \cap L^2 (0, \infty; W^{2,2} ) } \le C_{*}.
\end{equation}
\end{remark}
One difference between the system in \cite{MR2989441} and system (\ref{Oldroyd}) is the presence of diffusion in the evolution of $\rho$, and this allows us to use uniform decay result given in \cite{MR856876}. In this section, $C$ denotes constants that do not depend on the size of initial data or parameters $\nu_1, \nu_2, \mu$, and  $k$. The constants depending on them are denoted by $C_1, C_2$ and so on. First we briefly review the technique in \cite{MR571048} and \cite{MR856876} to show the decay of $\rho$. From the fourth equation of (\ref{Oldroyd}) we get
\begin{equation}
\frac{d}{dt} \norm{\rho(t)}_{L^2} ^2 + 2 \nu_2 \norm{\nabla_x \rho (t)}_{L^2} ^2 = 0.
\end{equation}
Then by Plancherel's theorem we have
\begin{equation}
\frac{d}{dt} \left ( \int \left | \hat{\rho} \right |^2 dx \right ) + \frac{2}{t+1} \int \left | \hat{\rho} \right |^2 dx  \le \frac{2}{t+1} \int_{A(t)} \left | \hat{\rho} \right |^2 d\xi
\end{equation}
where 
\begin{equation}
A(t) = \left \{ \xi : |\xi| \le \left ( \frac{2}{\nu_2 (t+1) } \right ) ^{\frac{1}{2}} \right \}.
\end{equation}
Noting that 
\begin{equation}
| \hat{\rho} (\xi, t) |^2 \le \norm{\rho}_{L^1} ^2 = \norm{\rho_0 }_{L^1} ^2
\end{equation}
and multiplying $ (t+1)^2$ to each side and integrating in time gives
\begin{equation}
\norm{\rho(t)}_{L^2} ^2 \le \frac{C}{(t+1)} \left ( \norm{\rho_0}_{L^2} ^2 + \frac{1}{\nu_2} \norm{\rho_0}_{L^1} ^2 \right ).
\end{equation}
For higher $L^p$ norm, let $p = 2^s$, $q = 2^{s-1}$, with $s \ge 2$. Multiplying $p \rho^{p-1}$ to the same equation and integrating, and applying integration by parts we get
\begin{equation}
\frac{d}{dt} \int |\rho|^p dx = - \nu_2 p(p-1) \int |\nabla_x \rho |^2 \rho^{p-2} dx
\end{equation} 
and since 
\begin{equation}
|\nabla_x \rho|^2 \rho^{p-2} = |\rho^{q-1} \nabla_x \rho|^2 = \frac{1}{q^2} | \nabla_x (\rho ^q ) |^2
\end{equation}
and $\frac{p(p-1)}{q^2} \ge 1$ for $s \ge 2$ we have
\begin{equation}
\frac{d}{dt} \int |\rho^q|^2 dx + \nu_2 \int |\nabla_x \left ( \rho^q \right ) | ^2 dx \le 0.
\end{equation}
Applying the decay estimate for $L^2$ repeatedly, and noting that by maximum principle $\norm{\rho}_{L^\infty} \le \norm{\rho_0}_{L^\infty}$ we have
\begin{equation}
\norm{\rho(t)}_{L^\infty} \le \frac{C}{t+1} \max_{1 \le r \le \infty} \left (1 + \norm{\rho_0}_{L^r} \right ). \label{density}
\end{equation}
We establish estimates on $u$ and $\tau$. First we start with a simple lmma. 
\begin{lemma}
Suppose that $h : [0, \infty ) \rightarrow [0, \infty )$ is a non-increasing function such that $h(0) <\infty$ and $\lim_{t \rightarrow \infty} \frac{h'(t) } {h(t) } = 0$. Then
\begin{equation}
\int_0 ^t e^{-2k(t-s)} h(s) ds \le C h(t).
\end{equation} \label{exponential}
\end{lemma}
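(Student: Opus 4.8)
The plan is to read the hypothesis $\lim_{t\to\infty} h'(t)/h(t)=0$ as a slowly-varying condition that forbids $h$ from decaying faster than the exponential weight $e^{-2kt}$, and then to split the integral into a ``recent past'' part, where this slow variation controls $h(s)$ in terms of $h(t)$, and a fixed ``initial'' part, which the weight renders negligible. After the substitution $r=t-s$ the quantity to estimate is $\int_0^t e^{-2kr} h(t-r)\,dr$, a weighted average of past values of $h$ concentrated near $r=0$; since $h$ is non-increasing we have $h(t-r)\ge h(t)$, so the real content is an upper bound on the growth of $h(t-r)/h(t)$ as $r$ increases.

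First I would make the hypothesis quantitative. Taking $\varepsilon=k$, choose $T$ so that $-h'(s)/h(s)\le k$ for all $s\ge T$ (note $h'\le 0$, so this reads $|h'|/h\le k$). Integrating $\frac{d}{ds}\log h(s)$ over $[t-r,t]$ gives, whenever $t-r\ge T$, the one-sided bound $h(t-r)\le e^{kr} h(t)$; integrating instead over $[T,t]$ gives the lower bound $h(t)\ge h(T)\,e^{-k(t-T)}$ for $t\ge T$, which I will use to absorb the error term. Both steps require $h>0$ (implicit in the finiteness of $h'/h$) and absolute continuity of $h$.

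Next, for $t>T$ I would write $\int_0^t e^{-2kr} h(t-r)\,dr=\int_0^{t-T}+\int_{t-T}^t$. On the first integral $t-r\ge T$, so the slow-variation bound yields $e^{-2kr}h(t-r)\le e^{-kr}h(t)$ and the integral is at most $h(t)/k$. On the second integral $s=t-r$ ranges over $[0,T]$, so I bound crudely $h(t-r)\le h(0)$ and $e^{-2kr}\le e^{-2k(t-T)}$, giving a contribution at most $T\,h(0)\,e^{-2k(t-T)}$; applying $e^{-k(t-T)}\le h(t)/h(T)$ from the lower bound turns this into $\le \frac{T h(0)}{h(T)}\,h(t)$. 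The small-time range $t\le T$ is immediate, since there the whole integral is at most $h(0)/(2k)$ while $h(t)\ge h(T)>0$. Collecting the pieces produces a single constant $C$ depending on $k$, $T$, $h(0)$, and $h(T)$.

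The main obstacle is precisely that the hypothesis is only asymptotic: the slow-variation estimate fails on the fixed window $[0,T]$, and a naive bound there would lose a factor that grows in $t$. The resolution, and the crux of the argument, is the observation that the exponential weight forces the contribution of $[0,T]$ to decay like $e^{-2kt}$, whereas the very same hypothesis guarantees $h(t)$ decays no faster than $e^{-kt}$; this mismatch of rates is what keeps the ratio bounded uniformly in $t$.
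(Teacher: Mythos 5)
Your proof is correct, but it takes a genuinely different route from the paper's. The paper's entire proof is one sentence: it notes the bound need only be checked as $t \to \infty$ and invokes L'H\^{o}pital's rule, i.e.\ it writes the ratio as $\left( \int_0^t e^{2ks} h(s)\, ds \right ) / \left ( e^{2kt} h(t) \right )$ and computes $\lim_{t \to \infty} \frac{e^{2kt} h(t)}{e^{2kt} \left ( 2k h(t) + h'(t) \right )} = \frac{1}{2k + \lim_{t \to \infty} h'(t)/h(t)} = \frac{1}{2k}$, boundedness on compact time intervals being automatic from continuity and positivity of $h$. You replace this soft asymptotic step by a quantitative one: extracting $-h'(s)/h(s) \le k$ for $s \ge T$, integrating $\log h$ to get the slow-variation bound $h(t-r) \le e^{kr} h(t)$ and the subexponential lower bound $h(t) \ge h(T) e^{-k(t-T)}$, and splitting the integral at $r = t-T$. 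The two arguments rest on the same mechanism --- the hypothesis forces $h$ to decay strictly slower than the kernel $e^{-2kt}$, so the weighted average concentrates near $s = t$ --- but yours is self-contained, produces an explicit constant (of the form $\frac{1}{k} + \frac{T h(0)}{h(T)} + \frac{h(0)}{2k h(T)}$), and makes explicit a point the paper's one-liner glosses over: the $\ast/\infty$ form of L'H\^{o}pital requires $e^{2kt} h(t) \to \infty$, which is precisely the lower bound $h(t) \ge h(T) e^{-k(t-T)}$ that you derive and use to absorb the contribution of the initial window $[0,T]$. What the paper's proof buys is brevity; what yours buys is a complete verification of the hypotheses and explicit constants, at the cost of a page rather than a line.
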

\begin{proof}
We only need to establish the bound for $t \rightarrow \infty$, which is obvious by L'H\^{o}pital.
\end{proof}
\begin{proof}[Proof of Theorem \ref{Boundedness} ]
First, from the free energy estimate (\ref{freeenergy}) we get
\begin{equation}
\norm{u(t)}_{L^2} ^2 + 2 \nu_1 \int_0 ^t \norm{\nabla_x u (s) }_{L^2} ^2 ds \le C_1 \label{estimate1}
\end{equation}
where $C_1$ depends only on initial data. Next, by multiplying $\tau$ and integrating the third equation, we have
\begin{equation}
\frac{d}{dt} \norm{\tau}_{L^2} ^2 + \nu_2 \norm{\nabla_x \tau}_{L^2} ^2 + 4k \norm{\tau}_{L^2} ^2 \le \norm{\nabla_x u}_{L^2} \norm{\tau}_{L^4} ^2 + \norm{\nabla_x u}_{L^2} \norm{\tau}_{L^2}   \norm{\rho}_{L^\infty }.
\end{equation}
Then using (\ref{density}) and Ladyzhenskaya's inequality 
\begin{equation}
\norm{\tau}_{L^4} ^2 \le C \norm{\tau}_{L^2} \norm{\nabla_x \tau}_{L^2}
\end{equation}
valid in two dimension, we obtain
\begin{equation}
\frac{d}{dt} \norm{\tau }_{L^2} ^2 + \nu_2 \norm{\nabla_x \tau}_{L^2} ^2 + (4k - \frac{C}{\nu_2} \norm{\nabla_x u}_{L^2 } ^2 ) \norm{\tau}_{L^2} ^2 \le \frac{C_2}{(1+t)^2} \label{estimate2_1}
\end{equation} 
where $C_2 = C \nu_2 \max_{1 \le r \le \infty} (1 + \norm{\rho_0}_{L^r} )^2$. Putting
\begin{equation}
A(t) = 4k - \frac{C}{\nu_2} \norm{\nabla_x u (t) }_{L^2 } ^2
\end{equation} 
then we get 
\begin{equation}
\norm{\tau(t)}_{L^2} ^2 \le \exp \left (-\int_0 ^t A(r) dr \right ) \norm{\tau_0}_{L^2} ^2 + \int_0 ^t \exp \left ( - \int_s ^t A(r) dr \right ) \frac{C_2}{(1+s)^2 } ds
\end{equation}
but from (\ref{estimate1}) and Lemma \ref{exponential} we have
\begin{equation}
\norm{\tau(t)}_{L^2} ^2 \le C C_2 \exp \left ( \frac{C C_1}{\nu_2 } \right ) \frac{1}{(1+t)^2} + \exp \left ( \frac{C C_1 }{\nu_2} - 4kt \right ) \norm{\tau_0 }_{L^2} ^2 \le \frac{C_3}{(1+t)^2}. \label{estimate2_2}
\end{equation}
Also, estimates (\ref{estimate2_1}) and (\ref{estimate2_2}) give us
\begin{equation}
\begin{gathered}
\nu_2 \int_0 ^t \norm{\nabla_x \tau(s) }_{L^2} ^2 ds + 4k \int_0 ^t \norm{\tau(s) }_{L^2} ^2 ds \le C_4 = \norm{\tau_0}_{L^2} ^2 + \int_0 ^t  \frac{C C_3 \norm{\nabla_x u}_{L^2} ^2 }{ \nu_2 (1+s)^2} ds + \int_0 ^t \frac{C_2 ds} {(1+s) ^2}.
\end{gathered} \label{estimate2_3}
\end{equation}
By taking curl to the first equation of the system (\ref{Oldroyd}), we obtain
\begin{equation}
\partial_t \omega + u \cdot \nabla_x \omega = \nu_1 \Delta_x \omega + \mu \nabla_x ^{\perp} \cdot \nabla_x \cdot \tau \label{vorticityeq}
\end{equation}
where $\omega = \partial_1 u_2 - \partial_2 u_1$ is the vorticity. Then we have
\begin{equation}
\frac{d}{dt} \norm{\omega}_{L^2} ^2 + \nu_1 \norm{\nabla_x \omega}_{L^2} ^2 \le \frac{\mu^2}{\nu_1} \norm{\nabla_x \tau}_{L^2} ^2,
\end{equation}
and by (\ref{estimate2_3}) we have
\begin{equation}
\norm{\omega(t)}_{L^2} ^2 + \nu_1 \int_0 ^t \norm{\nabla_x \omega(s)}_{L^2} ^2 ds \le \frac{\mu^2 C_4}{\nu_1 \nu_2} = C_5. \label{estimate3}
\end{equation}
To estimate $\norm{ \nabla_x \tau}_{L^2} ^2$, we multiply $- \Delta_x \tau$ to the third equation of (\ref{Oldroyd}) and integrate: we have
\begin{equation}
\begin{gathered}
\frac{1}{2} \frac{d}{dt} \norm{\nabla_x \tau}_{L^2} ^2 + 2k \norm{\nabla_x \tau}_{L^2} ^2 + \nu_2 \norm{ \Delta_x \tau}_{L^2} ^2 = - \int (\nabla_x u) \cdot (\nabla_x \tau) \cdot (\nabla_x \tau) dx \\
+ \int \left ( (\nabla_x u) \tau + \tau (\nabla_x u)^T \right ) (- \Delta_x \tau) dx 
+ \int \rho \left ( (\nabla_x u) + (\nabla_x u )^T \right ) (- \Delta_x \tau) dx.
\end{gathered}
\end{equation}
The first term in the right hand side is bounded by, using Ladyzhenskaya's inequality and Young's inequality,
\begin{equation}
\left | \int (\nabla_x u) \cdot (\nabla_x \tau) \cdot (\nabla_x \tau) dx \right | \le \frac{C}{\nu_2} \norm{\nabla_x u}_{L^2} ^2 \norm{\nabla_x \tau}_{L^2} ^2 + \frac{\nu_2}{8} \norm{\Delta_x \tau}_{L^2} ^2.
\end{equation}
We apply the integration by parts, and then Ladyzhenskaya's inequality to the second term to obtain
\begin{equation}
\begin{gathered}
|\int \left ( (\nabla_x u) \tau + \tau (\nabla_x u)^T \right ) (- \Delta_x \tau ) dx | \\
 \le \frac{C}{\nu_2} \norm{\nabla_x u}_{L^2} ^2 \norm{\nabla_x \tau}_{L^2} ^2 + \frac{\nu_2}{8} \norm{\Delta_x \tau}_{L^2} ^2 + C \norm{\nabla_x \omega}_{L^2} \norm{\tau}_{L^2} ^{\frac{1}{2}} \norm{\nabla_x \tau}_{L^2} \norm{\Delta_x \tau}_{L^2} ^{\frac{1}{2}}
\end{gathered}
\end{equation}
and by applying Young's inequality we can bound the second term by
\begin{equation}
\frac{C}{\nu_2} \norm{\nabla_x u}_{L^2} ^2 \norm{\nabla_x \tau}_{L^2} ^2 + \frac{\nu_2}{4} \norm{\Delta_x \tau}_{L^2} ^2 + C \norm{\nabla_x \omega}_{L^2} ^2 \norm{\nabla_x \tau}_{L^2} ^2 + \frac{1}{\nu_2} \norm{\tau}_{L^2} ^2. \label{improve1}
\end{equation}
The last term is bounded by
\begin{equation}
\frac{C}{\nu_2} \norm{\rho}_{L^\infty} ^2 \norm{\nabla_x u}_{L^2} ^2 + \frac{\nu_2} {8} \norm{\Delta_x \tau}_{L^2} ^2  \le \frac{C_2 C_5} {\nu_2 ^2 (1+t)^2 } + \frac{\nu_2}{8} \norm{\Delta_x \tau}_{L^2} ^2. \label{improve2}
\end{equation}
To sum up, we have
\begin{equation}
\begin{gathered}
\frac{d}{dt} \norm{\nabla_x \tau}_{L^2} ^2 + \left ( 4k - \frac{C}{\nu_2} \norm{\nabla_x u}_{L^2} ^2 - C \norm{\nabla_x \omega}_{L^2 } ^2 \right ) \norm{\nabla_x \tau}_{L^2} ^2 + \nu_2 \norm{ \Delta_x \tau}_{L^2} ^2 \\
\le \frac{C}{\nu_2} \norm{\tau}_{L^2} ^2 + \frac{C}{\nu_2} \norm{\rho}_{L^\infty}^2 \norm{\nabla_x u}_{L^2} ^2 \le \frac{C_2 C_5}{\nu_2 ^2 (1+t)^2 } + \frac{1}{\nu_2} \norm{\tau}_{L^2} ^2. 
\end{gathered} \label{estimate4}
\end{equation}
Applying Gr\"{o}nwall with Lemma \ref{exponential} we have
\begin{equation}
\begin{gathered}
\norm{\nabla_x \tau (t)}_{L^2} ^2 \le e^{ \frac{CC_1}{\nu_1 \nu_2} + \frac{C C_3}{\nu_1} } \left (C \left ( \frac{C_2 C_5}{\nu_2 ^2} + \frac{C_3}{\nu_2} \right ) \frac{1}{(1+t)^2 } +  e^{ -4kt} \norm{\nabla_x \tau_0 } _{L^2} ^2 \right ) \le \frac{C_6}{(1+t)^2}
\end{gathered}
\end{equation}
and
\begin{equation}
\begin{gathered}
4k \int_0 ^t \norm{\nabla_x \tau (s) }_{L^2} ^2ds + \nu_2 \int_0 ^t \norm{ \Delta_x \tau (s) }_{L^2} ^2 ds \\
\le \norm{\nabla_x \tau_0}_{L^2} ^2 + \left ( \frac{C (C_1+C_5) C_6}{\nu_1} + \frac{C C_3}{\nu_2} + \frac{C C_2 C_5}{\nu_2 ^2} \right ) = C_7.
\end{gathered}
\end{equation}
We can also get an estimate for $\norm{\nabla_x \omega}_{L^2} ^2$; by multiplying $-\Delta_x \omega$ to the vorticity equation (\ref{vorticityeq}) and integrating, we have
\begin{equation}
\frac{1}{2} \frac{d}{dt} \norm{\nabla_x \omega(t)}_{L^2} ^2 + \frac{3}{4} \nu_1  \norm{\Delta_x \omega(t) } _{L^2} ^2  \le \frac{C}{\nu_1} \norm{\Delta_x \tau }_{L^2} ^2 + \norm{\nabla_x \omega}_{L^p} ^2 \norm{\nabla_x u}_{L^q} \label{gradomega}
\end{equation}
where $\frac{2}{p}+\frac{1}{q} = 1$. Our choice of $p$ and $q$ are $p = q = 3$. By the Gagliardo-Nirenberg interpolation inequality
\begin{equation}
\begin{gathered}
\norm{\nabla_x \omega}_{L^3} \le C \norm{\Delta_x \omega}_{L^2} ^{\frac{2}{3}} \norm{\omega}_{L^2} ^{\frac{1}{3} }, \,\, \norm{\nabla_x u}_{L^3} \le C \norm{\Delta_x u}_{L^2} ^{\frac{2}{3}} \norm{u}_{L^2} ^{\frac{1}{3} }, 
\end{gathered}
\end{equation}
and Young's inequality, we have
\begin{equation}
\begin{gathered}
\norm{\nabla_x \omega}_{L^3} ^2 \norm{\nabla_x u }_{L^3 } \le C \norm{\Delta_x \omega}_{L^2} ^{\frac{4}{3} } \norm{\nabla_x \omega} _{L^2} ^{\frac{2}{3} } \norm{\omega}_{L^2} ^{\frac{2}{3} } \norm{u}_{L^2} ^{\frac{1}{3} } \\
\le \frac{\nu_1}{4} \norm{\Delta_x \omega}_{L^2} ^2 + \frac{C}{\nu_1 ^2} \norm{\omega}_{L^2} ^2 \norm{u}_{L^2} \norm{\nabla_x \omega}_{L^2} ^2
\end{gathered} \label{GNyoung}
\end{equation}
to obtain
\begin{equation}
\begin{gathered}
\frac{d}{dt} \norm{\nabla_x \omega  }_{L^2 } ^2 + \nu_1 \norm{\Delta_x \omega }_{L^2} ^2 \le \frac{C}{\nu_1} \norm{\Delta_x \tau}_{L^2} ^2 + \frac{C}{\nu_1 ^2} \norm{\omega}_{L^2} ^2 \norm{u}_{L^2} \norm{\nabla_x \omega}_{L^2} ^2.
\end{gathered} \label{estimate5}
\end{equation}
Since $\norm{u(t)}_{L^2} \le \sqrt{C_1}, \int_0 ^T \norm{\omega(t) }_{L^2} ^2 dt \le \frac{C C_1}{\nu_1}$, we have
\begin{equation}
\sup_{t \in [0, T] } \norm{\nabla_x \omega (t) }_{L^2 } ^2 + \nu_1 \int_0 ^T \norm{\Delta_x \omega (t) } _{L^2 } ^2 dt \le \exp \left ( \frac{C C_1^{\frac{3}{2} } } {\nu_1 ^3 }  \right ) \left ( \norm{\nabla_x \omega (0 ) }_{L^2} ^2 + \frac{C C_7} {\nu_1 \nu_2} \right ) = C_8.
\end{equation}
Moreover, we have $L^1$ decay of $\tau$. From
\begin{equation}
\frac{d}{dt} \int \mathrm{Tr} \tau = 2 \int \mathrm{Tr} \left ( (\nabla_x u ) \tau \right ) - 2k \int \mathrm{Tr} \tau
\end{equation}
we obtain
\begin{equation}
\frac{d}{dt} \left ( e^{2kt} \int \mathrm{Tr} \tau \right ) \le e^{2kt} \norm{\nabla_x u(t)}_{L^2} \norm{\tau (t) }_{L^2} \le e^{2kt} \frac{C \sqrt{C_5 C_3}}{(1+t)}  = \frac{C_9 e^{2kt} }{1+t}. \label{estimate6}
\end{equation}
Therefore,
\begin{equation}
\int \mathrm{Tr} \tau (t) dt \le e^{-2kt} \int \mathrm{Tr} \tau_0 + \frac{C C_9}{(1+t)} \le \frac{C C_9}{1+t}.
\end{equation}
Finally, by multiplying $-\Delta_x \rho$ to the fourth equation of (\ref{Oldroyd}) and integrating, and using Ladyzhenskaya's inequality we have
\begin{equation}
\frac{d}{dt} \norm{\nabla_x \rho (t) }_{L^2} ^2 + \nu_2 \norm{\Delta_x \rho (t) }_{L^2} ^2 \le \frac{C}{\nu_2} \norm{\nabla_x u}_{L^2} ^2 \norm{\nabla_x \rho}_{L^2} ^2,
\end{equation}
and by Gr\"{o}nwall 
\begin{equation}
\sup_{t \in [0, T] } \norm{\nabla_x \rho(t) }_{L^2} ^2 + \nu_2 \int_0 ^T \norm{\Delta_x \rho (t) }_{L^2} ^2 dt \le \exp \left ( \frac{C C_1} {\nu_1 \nu_2 } \right ) \norm{\nabla_x \rho (0) } _{L^2} ^2 = C_{10} .
\end{equation}
\end{proof}
\begin{remark} \label{fasterdecay}
We will make an improvement for (\ref{estimate2_1}), (\ref{estimate4}) and (\ref{estimate6}). For (\ref{estimate2_1}) we used the bound
\begin{equation}
\int \rho ((\nabla_x u) +(\nabla_x u)^T ) \tau dx \le C \nu_2 \norm{\rho}_{L^\infty}^2 + \frac{C}{\nu_2} \norm{\nabla_x u}_{L^2} ^2 \norm{\tau}_{L^2} ^2.
\end{equation}
We may instead use
\begin{equation}
\int \rho ((\nabla_x u) +(\nabla_x u)^T ) \tau dx \le \frac{C}{k} \norm{\rho}_{L^\infty} ^2 \norm{\nabla_x u}_{L^2} ^2 + k \norm{\tau}_{L^2} ^2.
\end{equation}
Then we have
\begin{equation}
\frac{d}{dt} \norm{\tau}_{L^2} ^2 + \nu_2 \norm{\nabla_x \tau}_{L^2} ^2 + \left (3k - \frac{C}{\nu_2} \norm{\nabla_x u}_{L^2} ^2 \right ) \norm{\tau}_{L^2 }^2 \le \frac{C_2}{k} \norm{\nabla_x u}_{L^2} ^2 \frac{1}{(1+t)^2 } \label{estimate2_f}.
\end{equation}
In the later section, we show that $\norm{\nabla_x u (t)}_{L^2} ^2 \le \frac{C}{(t+1)}$ in time, so (\ref{estimate2_f}) gives faster decay of $\norm{\tau}_{L^2} ^2$, $\norm{\nabla_x \tau}_{L^2 } ^2 $ by (\ref{estimate4}), and $\norm{\tau}_{L^1}$ by (\ref{estimate6}). 
\end{remark}

\section{Decay of the strong solution}

In this section, we prove the following theorem.
\begin{theorem} \label{decaystrong}
Suppose that $u_0 \in  \mathbb{P} W^{2,2}$ and $\rho_0, \sigma_0$ satisfies (\ref{initialcond}) and $\rho_0 \in L^\infty$. Furthermore, suppose that $u_0 \in L^1$. Then there is a constant $C_{**}$ depending on initial data and parameter such that
\begin{equation}
\norm{u(t)}_{W^{2,2} } \le \frac{C_{**} }{(t+1) ^{\frac{1}{2}} },\,\, \norm{\tau (t) }_{W^{1,2} \cap L^1 } \le \frac{C_{**} }{(t+1)^{\frac{3}{2} } } .
\end{equation}
\end{theorem}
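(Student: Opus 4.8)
The plan is to regard the first equation of (\ref{Oldroyd}) as a forced Navier--Stokes system with forcing $\mu\nabla_x\cdot\tau$ and to run Schonbek's Fourier splitting method, for which the ingredients are already prepared in Theorem \ref{Boundedness}. Projecting onto divergence-free fields and testing against $u$ kills the transport term and gives $\frac{d}{dt}\norm{u}_{L^2}^2 + \nu_1\norm{\nabla_x u}_{L^2}^2 \le \frac{\mu^2}{\nu_1}\norm{\tau}_{L^2}^2$, whose right-hand side is $O((1+t)^{-2})$ since $\norm{\tau(t)}_{L^2}\le C_*(1+t)^{-1}$. Writing the dissipation on the Fourier side and splitting frequencies at the ball $\{|\xi|\le R(t)\}$ with $R(t)^2 = \beta/(\nu_1(1+t))$ yields
\[
\frac{d}{dt}\norm{u}_{L^2}^2 + \frac{\beta}{1+t}\norm{u}_{L^2}^2 \le \frac{\beta}{1+t}\int_{|\xi|\le R(t)}|\hat u(\xi,t)|^2\,d\xi + \frac{C}{(1+t)^2},
\]
so after multiplying by the integrating factor $(1+t)^\beta$ everything reduces to a bound on the low-frequency mass $\int_{|\xi|\le R(t)}|\hat u|^2\,d\xi$.

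To estimate that mass I would use the Duhamel formula for $\hat u$. The hypothesis $u_0\in L^1$ enters precisely here, giving $|\hat u_0(\xi)|\le\norm{u_0}_{L^1}$, so the linear part contributes $\int_{|\xi|\le R}|\hat u_0|^2 e^{-2\nu_1|\xi|^2 t}\,d\xi\le C(1+t)^{-1}$. Both the nonlinearity $u\cdot\nabla_x u=\nabla_x\cdot(u\otimes u)$ and the forcing $\nabla_x\cdot\tau$ are exact divergences, so their symbols carry a factor $|\xi|$; using $|\widehat{u\otimes u}(\xi)|\le\norm{u}_{L^2}^2$ and $|\hat\tau(\xi)|\le\norm{\tau}_{L^1}\le C(1+t)^{-1}$ and splitting the time integral at $s=t/2$, the forcing contributes $O((1+t)^{-2}\log^2(1+t))$. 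The nonlinear term is the one coupled to the unknown rate: assuming $\norm{u(s)}_{L^2}^2\le M(1+s)^{-\alpha}$ on $[0,t]$ and again splitting at $s=t/2$, the heat factor converts this into $O(M^2(1+t)^{-2\alpha})$, and feeding it back produces $\norm{u(t)}_{L^2}^2\le C(1+t)^{-1}+CM^2(1+t)^{-\min(2\alpha,1)}$. Thus the decay exponent doubles at each iteration until it saturates at $1$, giving $\norm{u(t)}_{L^2}\le C_{**}(1+t)^{-1/2}$.

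The main obstacle is seeding this bootstrap. With only the uniform bound $\norm{u}_{L^2}^2\le C_1$ from (\ref{freeenergy}) (the case $\alpha=0$) the low-frequency nonlinear mass is merely $O(1)$ and the iteration does not improve, so one first needs a slow but positive decay $\norm{u(t)}_{L^2}\to0$ to start. I expect this to be the technical heart of the argument, and it is exactly where the finer part of Schonbek's technique is required — exploiting the finiteness of the total dissipation $\int_0^\infty\norm{\nabla_x u}_{L^2}^2\,ds<\infty$ and the decay of the linear Stokes flow of $u_0$, rather than mere boundedness of the energy.

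Once $\norm{u(t)}_{L^2}\le C_{**}(1+t)^{-1/2}$ is established, the remaining rates follow by propagating it upward. Running the same splitting on the vorticity equation (\ref{vorticityeq}) and using $|\hat\omega(\xi)|\le|\xi||\hat u(\xi)|$, the low-frequency mass of $\omega$ is bounded by $R(t)^2\int_{|\xi|\le R}|\hat u|^2\le C(1+t)^{-2}$, while the forcing is controlled by $\norm{\nabla_x\tau}_{L^2}^2\le C(1+t)^{-2}$ from Theorem \ref{Boundedness}; this gives $\norm{\nabla_x u(t)}_{L^2}^2\le C(1+t)^{-1}$, which is exactly the input demanded by Remark \ref{fasterdecay}. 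Then (\ref{estimate2_f}) upgrades the stress to $\norm{\tau(t)}_{L^2}^2\le C(1+t)^{-3}$, (\ref{estimate4}) propagates this (together with $\norm{\rho}_{L^\infty}\le C(1+t)^{-1}$) to $\norm{\nabla_x\tau(t)}_{L^2}^2\le C(1+t)^{-3}$, and (\ref{estimate6}) with $\norm{\nabla_x u}_{L^2}\norm{\tau}_{L^2}\le C(1+t)^{-2}$ yields $\norm{\tau(t)}_{L^1}\le C(1+t)^{-3/2}$, so that $\norm{\tau(t)}_{W^{1,2}\cap L^1}\le C_{**}(1+t)^{-3/2}$. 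Finally, applying the splitting once more to $\nabla_x\omega$ (using $|\widehat{\nabla_x\omega}|\le|\xi|^2|\hat u|$ and the decay of $\norm{\Delta_x\tau}_{L^2}$) controls $\norm{D_x^2 u(t)}_{L^2}$; since every lower-order piece decays at least as fast, one concludes $\norm{u(t)}_{W^{2,2}}\le C_{**}(1+t)^{-1/2}$.
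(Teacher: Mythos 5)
Your overall strategy---viewing the momentum equation as a forced Navier--Stokes system, Fourier splitting, and propagating decay through $\omega$, $\tau$, and back into $u$---is the same philosophy as the paper's, but your proposal has a genuine gap exactly where you flag it: the seeding of the $L^2$ decay of $u$. You correctly observe that with only $\norm{u}_{L^2}^2 \le C_1$ from (\ref{freeenergy}) the low-frequency nonlinear mass is $O(1)$ and your doubling iteration never starts, and you then defer this step to ``the finer part of Schonbek's technique'' without carrying it out. That deferred step is not a technicality: in two dimensions the passage from bounded energy plus finite total dissipation to the sharp rate $\norm{u(t)}_{L^2}^2 \le C(1+t)^{-1}$ is the hardest part of the whole argument. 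The paper does not attempt it directly; it quotes Wiegner's theorem for forced Navier--Stokes (Theorem \ref{Wiegner}, from \cite{MR881519}), whose hypotheses require only that $f = \mu \nabla_x \cdot \tau \in L^1(0,\infty;L^2)$ and that $(u_0,f) \in D_1^{(2)}$, i.e.\ a decay bound for the \emph{linear} heat equation with this data and forcing; that verification (Lemma \ref{datacond}) uses the same splitting you describe but involves no nonlinearity at all. The nonlinear bootstrap you are missing is precisely the content of this cited black box; without invoking such a result, or reproving it, your argument does not close.

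This gap also forces a reordering relative to the paper. The paper proves the vorticity decay $\norm{\omega(t)}_{L^2}^2 \le C_{**}(t+1)^{-1}$ \emph{first} and independently of any decay of $\norm{u}_{L^2}$: the Duhamel bound $|\hat{u}(\xi,t)| \le C_*(|\xi|^{-1}+1)$, which needs only the uniform bounds of Theorem \ref{Boundedness} together with $u_0 \in L^1$, yields the pointwise bound $|\hat{\omega}| \le C_*(1+|\xi|)$, and that is all the splitting argument for $\omega$ requires. The $\omega$ decay then upgrades the stress via Remark \ref{fasterdecay} to $\norm{\nabla_x \tau(t)}_{L^2} \le C_{**}(1+t)^{-3/2}$, and only this upgrade makes the forcing integrable in time; with the rate $\norm{\nabla_x \tau(t)}_{L^2} \le C_*(1+t)^{-1}$ of Theorem \ref{Boundedness} alone, $f \notin L^1(0,\infty;L^2)$ and Wiegner's theorem would not apply. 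Your order ($u$ first, then $\omega$ via $R(t)^2 \int_{|\xi|\le R}|\hat{u}|^2\,d\xi$) makes the unproven step a prerequisite for everything else, whereas the paper's order makes it the last step and reduces it to a citation. A smaller point: in your final $W^{2,2}$ step you invoke ``the decay of $\norm{\Delta_x \tau}_{L^2}$,'' which is never established (only its time integrability, through $C_7$); the paper instead adds a multiple of (\ref{estimate4}) to (\ref{estimate5}) precisely to cancel the $\norm{\Delta_x \tau}_{L^2}^2$ term before splitting, and needs two passes (first $(t+1)^{-1/2}$, then $(t+1)^{-1}$) to reach the stated rate for $\norm{\nabla_x \omega}_{L^2}^2$.
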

\begin{proof}
We first follow the methods presented in \cite{MR1356749}. Let
\begin{equation}
S(t) = \left \{ \xi : | \xi | \le \left ( \frac{2}{\nu_1 (t+1) } \right )^{\frac{1}{2} } \right \}.
\end{equation}
We apply the estimate for the decay of $\norm{\omega (t) }_{L^2} ^2$. From the vorticity equation we have
\begin{equation}
\begin{gathered}
\frac{d}{dt} \norm{\omega (t) }_{L^2} ^2 \le - \nu_1 \norm{ \nabla_x \omega (t) }_{L^2} ^2 + \frac{C \mu^2}{\nu_1} \norm{\nabla_x \tau (t) }_{L^2} ^2, 
\end{gathered}
\end{equation}
and by Plancherel's theorem
\begin{equation}
\begin{gathered}
\frac{d}{dt} \norm{\omega(t) }_{L^2} ^2 \le - \nu_1 \int_{\mathbb{R}^2} |\xi|^2 | \hat{\omega}(t) |^2 d \xi + \frac{C C_*}{\nu_1 (t+1) ^2 } \le - \nu_1 \int_{S(t) ^c} |\xi|^2 | \hat{\omega}(t) |^2 d \xi + \frac{C C_*}{\nu_1 (t+1) ^2 } \\
\le - \frac{2}{t+1} \int_{S(t) ^c} |\hat{u} (t) |^2 d\xi +\frac{C C_*}{\nu_1 (t+1) ^2 } \le - \frac{2}{t+1} \norm{\omega(t)}_{L^2} ^2 + \frac{2}{t+1} \int_{S(t) } |\hat{\omega} (t) |^2 d\xi + \frac{C C_*}{\nu_1 (t+1) ^2 }.
\end{gathered} 
\end{equation}
To summarize, we have
\begin{equation}
\begin{gathered}
\frac{d}{dt} \norm{\omega (t) }_{L^2} ^2 \le -\frac{2}{t+1} \norm{\omega(t)}_{L^2} ^2 + \frac{2}{t+1} \int_{S(t) } \left | \hat{\omega} (t) \right | ^2 d \xi + \frac{C C_{**} }{(t+1)^2}. 
\end{gathered} \label{wdecay1}
\end{equation} 
We need a pointwise estimate of $\hat{\omega} (\xi, t)$. For that purpose we investigate $\hat{u} (\xi, t) $. The phase space version of the velocity equation of (\ref{Oldroyd}) reads 
\begin{equation}
\partial_t \widehat{u} + \nu_1 |\xi|^2 \widehat{u} = - \xi \cdot \left ( \mathbb{I} - \frac{\xi \otimes \xi}{|\xi|^2} \right ) \mathcal{F} (u \otimes u - \mu \tau ),
\end{equation}
and from $\norm{\mathcal{F} (u \otimes u ) (t) }_{L^\infty} \le \norm{u(t) }_{L^2} ^2 \le C_*$ and $\norm{\mathcal{F} (\tau) (t) }_{L^\infty} \le \norm{\tau (t) }_{L^1} \le \frac{C_*}{(t+1)}$, we have
\begin{equation}
\begin{gathered}
\hat{u} (\xi, t) = e^{-\nu_1 |\xi|^2 t} \hat{u} (\xi, 0)  - \int_0 ^t e^{-\nu_1 |\xi|^2 (t-s) } \xi \cdot \left ( \mathbb{I} - \frac{\xi \otimes \xi}{|\xi|^2} \right ) \mathcal{F} (u \otimes u - \mu \tau ) (s) ds, \\
|\widehat{u} (\xi, t) | \le C_* ( \frac{1}{|\xi|} + 1)
\end{gathered}
\end{equation}
given that $u_0 \in L^1$. Therefore, since $|\hat{\omega} (\xi, t) | \le |\xi| |  \hat{u} (\xi, t) | \le C_* \left (|\xi| + 1 \right ) $. Therefore, $\int_{S(t) } |\hat{\omega} (t) |^2 d\xi \le \frac{C_*}{(t+1)^2 \nu_1 }$, and by multiplying $(t+1)^2$ to (\ref{wdecay1}) we obtain
\begin{equation}
\frac{d}{dt} \left ( (t+1) ^2 \norm{\omega(t) }_{L^2} ^2 \right ) \le C C_{**},
\end{equation}
and therefore
\begin{equation}
\norm{\omega(t)}_{L^2} ^2 \le \frac{C_{**}}{ (t+1)}. \label{wdecay2}
\end{equation}
In addition, as we mentioned in Remark \ref{fasterdecay}, this gives a better decay rate for $\tau$: 
\begin{equation}
\begin{gathered}
\norm{\tau(t)}_{L^2}  \le \frac{C_{**}}{(t+1)^{\frac{3}{2} } } , \norm{\nabla_x \tau(t)}_{L^2}  \le \frac{C_{**}}{(t+1)^{\frac{3}{2} } } , \norm{\tau(t)}_{L^1} \le \frac{C_{**}}{(t+1)^{\frac{3}{2} } }.
\end{gathered}
\end{equation}
Finally, from (\ref{estimate5}), using $\norm{\nabla_x \omega}_{L^2} ^2 \le C_*$ and previous estimates we have
\begin{equation}
\frac{C}{\nu_1 ^2} \norm{\omega (t) }_{L^2} ^2 \norm{u (t) }_{L^2} \norm{\nabla_x \omega (t) }_{L^2} ^2 \le \frac{C C_{**} }{(t+1) ^\frac{3}{2} } \label{estimate7}
\end{equation}
and adding multiples of (\ref{estimate4}) to (\ref{estimate5}) to cancel $\frac{C}{\nu_1} \norm{\Delta_x \tau}_{L^2} ^2$ in the right side of (\ref{estimate5}), we obtain
\begin{equation}
\begin{gathered}
\frac{d}{dt} \left ( \norm{\nabla_x \omega (t) }_{L^2} ^2 + \frac{C}{\nu_1 \nu_2 } \norm{\nabla_x \tau (t) }_{L^2} ^2 \right ) + \frac{C}{\nu_1 \nu_2 } \left ( 4k - \frac{C}{\nu_2} \norm{\nabla_x u (t) }_{L^2} ^2 - C \norm{\nabla_x \omega (t) }_{L^2} ^2 \right ) \norm{\nabla_x \tau (t) }_{L^2 } ^2 \\
+ \frac{C}{\nu_1 } \norm{\Delta_x \tau (t) }_{L^2} ^2 + \nu_1 \norm{\Delta_x \omega (t) }_{L^2} ^2 \le \frac{C_{**}}{(1+t)^3 } + \frac{C_{**} C_*}{(1+t)^{\frac{3}{2} } }.
\end{gathered}
\end{equation}
Then applying the same Fourier splitting technique, together with the pointwise bound
\begin{equation}
\left | \mathcal{F} (\nabla_x \tau ) (\xi, t) \right | \le |\xi| \norm{\tau (t) }_{L^1} \le \frac{C_* |\xi| } {(1+t) }, \,\, \left | \mathcal{F} (\nabla_x \omega ) (\xi, t ) \right | \le C_* |\xi| (1 + |\xi | )
\end{equation}
we obtain
\begin{equation}
\frac{d}{dt} \left ( (t+1) ^2 \left ( \norm{\nabla_x \omega (t) }_{L^2} ^2 + \frac{C}{\nu_1 \nu_2 } \norm{\nabla_x \tau (t) }_{L^2} ^2 \right ) \right ) \le C_{**} \left ( \frac{1}{(t+1) } + (t+1) ^{\frac{1}{2} } \right ),
\end{equation}
and from this we conclude that
\begin{equation}
\norm{\nabla_x \omega (t) }_{L^2} ^2 \le \frac{C_{**} }{(t+1) ^{\frac{1}{2} } }. \label{estimate8}
\end{equation}
We put (\ref{estimate8}) into (\ref{estimate7}), and then we repeat the last estimate to obtain
\begin{equation}
\norm{\nabla_x \omega (t) }_{L^2} ^2 \le \frac{C_{**} } {(t+1) }.
\end{equation}
Now we are ready to prove decay of $\norm{u(t)}_{L^2}$. We follow the argument in \cite{MR881519}. 
\begin{definition}
For given $u_0 \in L^2 (\mathbb{R}^2)$ and $f \in L^1 (0, +\infty; L^2 (\mathbb{R}^2) )$, we say that $(u_0, f )$ belong to the set $D_\alpha ^{(n)}$ with $\alpha \ge 0$ if there is a constant $C$ such that
\begin{equation}
\norm{v(t)}_{L^2} ^2 + (1+t)^2 \norm{f(t)}_{L^2} ^2 \le C (1+t)^{-\alpha}
\end{equation}
where $v(t)$ is the solution of the heat equation
\begin{equation}
\begin{gathered}
\partial_t v - \nu_1 \Delta_x v = f,  \,\, v(0) = u_0.
\end{gathered} \label{heat}
\end{equation}
\end{definition}
\begin{theorem} [\cite{MR881519}] \label{Wiegner}
Let $u$ be a strong solution of Navier-Stokes system
\begin{equation}
\begin{gathered}
\partial_t u + u \cdot \nabla_x u = - \nabla_x p + \nu_1 \Delta_x u + f, \\
\nabla_x u = 0, \,\,  u(0) = u_0 , \\
u \in C_w ([0, +\infty ), L^2 (\mathbb{R}^2 )) \cap L^2 _{loc} ( 0, +\infty ; W^{1,2} (\mathbb{R}^2 ) ).
\end{gathered}
\end{equation}
Let $u_0 \in L^2 (\mathbb{R}^2 ), \,\, f \in L^1 (0, +\infty; L^2 (\mathbb{R}^2 ) )$. Then we have the following.
\begin{enumerate}
\item $\norm{u(t)}_{L^2} \rightarrow 0$ for $t \rightarrow \infty$.
\item If $(u_0, f) \in D_{1} ^{(2)}$, then $\norm{u(t)}_{L^2} ^2 \le C (1+t)^{-1}$.
\item Furthermore, the solution $u$ is asymptotically equivalent to the heat system (\ref{heat}) in the sense that
\begin{equation}
\norm{u(t) - v(t) }_{L^2} ^2 \le C \left (\log (t+e) \right )^2 (t+1)^{-2}.
\end{equation}
\end{enumerate}
\end{theorem}
The proof of Theorem \ref{Wiegner} is based on the Fourier splitting technique. The proof of Theorem \ref{decaystrong} is completed by the following Lemma.
\end{proof}
\begin{lemma} \label{datacond}
Suppose that $u_0, \tau_0, \rho_0$ satisfy the initial conditions in Theorem \ref{decaystrong}. Then $(u_0,\nabla_x \cdot \tau)$ belong to the set $D_1 ^{(2)}$, where $(u, \tau, \rho)$ is the solution of (\ref{Oldroyd}) with initial data $(u_0, \tau_0, \rho_0).$
\end{lemma}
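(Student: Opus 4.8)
The plan is to verify directly the two requirements encoded in the definition of $D_1^{(2)}$: the pointwise-in-time bound on the forcing $f=\nabla_x\cdot\tau$, and the decay of the associated heat evolution $v$ solving (\ref{heat}). The forcing bound is immediate from the estimates already obtained in this section. Since $\nabla_x\cdot\tau$ is a contraction of the full gradient $\nabla_x\tau$, the enhanced decay $\norm{\nabla_x\tau(t)}_{L^2}\le C_{**}(1+t)^{-3/2}$ established after (\ref{wdecay2}) gives $\norm{\nabla_x\cdot\tau(t)}_{L^2}\le C_{**}(1+t)^{-3/2}$, and hence $(1+t)^2\norm{\nabla_x\cdot\tau(t)}_{L^2}^2\le C_{**}^2(1+t)^{-1}$, which is exactly the required $(1+t)^{-1}$ rate.

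The substantive part is to show $\norm{v(t)}_{L^2}^2\le C(1+t)^{-1}$. I would split $v=v_h+v_i$ into the homogeneous evolution $v_h(t)=e^{\nu_1 t\Delta}u_0$ and the Duhamel term $v_i(t)=\int_0^t e^{\nu_1(t-s)\Delta}\nabla_x\cdot\tau(s)\,ds$. For $v_h$ I would use that $u_0\in L^1\cap L^2$ (the hypothesis $u_0\in L^1$ of Theorem \ref{decaystrong} together with $u_0\in\mathbb{P}W^{2,2}$): the Fourier-side bound $|\hat{u}_0(\xi)|\le\norm{u_0}_{L^1}$ combined with the Gaussian integral $\int_{\mathbb{R}^2}e^{-2\nu_1|\xi|^2 t}\,d\xi=C/(\nu_1 t)$ gives $\norm{v_h(t)}_{L^2}^2\le C\norm{u_0}_{L^1}^2/(\nu_1 t)$, while the contraction bound $\norm{v_h(t)}_{L^2}\le\norm{u_0}_{L^2}$ handles $t$ near $0$; together these yield $\norm{v_h(t)}_{L^2}^2\le C(1+t)^{-1}$.

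For the Duhamel term $v_i$ a bare application of Minkowski's inequality only gives boundedness, since $\int_0^\infty\norm{\nabla_x\cdot\tau(s)}_{L^2}\,ds$ converges; extracting genuine decay is the main obstacle, and I would overcome it by splitting the time integral at $s=t/2$. On $[0,t/2]$ the semigroup has acted for time $t-s\ge t/2$, so I would use the derivative $L^1\to L^2$ smoothing estimate $\norm{e^{\nu_1\sigma\Delta}\nabla_x\cdot\tau(s)}_{L^2}\le C(\nu_1\sigma)^{-1}\norm{\tau(s)}_{L^1}$ (the exponent $-1$ being the two-dimensional rate $-\tfrac12-\tfrac{d}{4}$) together with the integrability of $\norm{\tau(s)}_{L^1}\le C_{**}(1+s)^{-3/2}$; since $t-s\ge t/2$, this contributes a term of order $t^{-1}$ to $\norm{v_i(t)}_{L^2}$, which decays faster than needed. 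On $[t/2,t]$ I would instead use Minkowski together with the pointwise decay $\norm{\nabla_x\cdot\tau(s)}_{L^2}\le C_{**}(1+s)^{-3/2}$; because $1+s\ge(1+t)/2$ on this interval, $\int_{t/2}^t(1+s)^{-3/2}\,ds\le C(1+t)^{-1/2}$, so this contribution is of order $(1+t)^{-1/2}$ in $L^2$, i.e.\ of order $(1+t)^{-1}$ after squaring.

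Combining the two pieces gives $\norm{v(t)}_{L^2}^2\le C(1+t)^{-1}$, and adding the forcing bound from the first paragraph yields $\norm{v(t)}_{L^2}^2+(1+t)^2\norm{\nabla_x\cdot\tau(t)}_{L^2}^2\le C(1+t)^{-1}$, which is precisely the membership $(u_0,\nabla_x\cdot\tau)\in D_1^{(2)}$. The decisive input throughout is the enhanced decay rate $(1+t)^{-3/2}$ for $\tau$ in $L^1$ and in $\dot W^{1,2}$, established via Remark \ref{fasterdecay} and the sharp bound $\norm{\nabla_x u(t)}_{L^2}^2\le C(1+t)^{-1}$; the time-splitting at $t/2$ then balances the smoothing rate of the heat semigroup against this decay, and I do not expect any difficulty beyond bookkeeping the constants.
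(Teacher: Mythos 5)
Your proposal is correct, and it verifies exactly what membership in $D_1^{(2)}$ requires: the forcing bound $(1+t)^2\norm{\nabla_x\cdot\tau(t)}_{L^2}^2\le C(1+t)^{-1}$ together with $\nabla_x\cdot\tau\in L^1(0,\infty;L^2)$, and the heat-flow decay $\norm{v(t)}_{L^2}^2\le C(1+t)^{-1}$. However, your route to the second bound is genuinely different from the paper's. The paper stays within the Fourier splitting framework used throughout: it writes the energy inequality for $v$, splits frequencies along the same shrinking ball $S(t)=\{|\xi|\le(2/(\nu_1(t+1)))^{1/2}\}$ used for $\omega$, and then controls the low-frequency contribution by a pointwise bound obtained from Duhamel \emph{in frequency space}, namely $|\hat v(\xi,t)|\le\norm{u_0}_{L^1}+C_{**}|\xi|$, which follows from $|\hat\tau(\xi,s)|\le\norm{\tau(s)}_{L^1}\le C_{**}(1+s)^{-3/2}$; multiplying the resulting differential inequality by $(t+1)^2$ and integrating gives the decay. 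You instead work with the physical-space Duhamel decomposition $v=v_h+v_i$, use the heat kernel bounds $\norm{e^{\nu_1 t\Delta}u_0}_{L^2}^2\lesssim\norm{u_0}_{L^1}^2/(\nu_1 t)$ and $\norm{e^{\nu_1\sigma\Delta}\nabla_x\cdot g}_{L^2}\lesssim(\nu_1\sigma)^{-1}\norm{g}_{L^1}$ (the correct two-dimensional rate), and split the Duhamel integral at $s=t/2$, balancing semigroup smoothing on $[0,t/2]$ against the pointwise-in-time $L^2$ decay of $\nabla_x\cdot\tau$ on $[t/2,t]$. Both arguments consume the same inputs --- $u_0\in L^1\cap L^2$ and the enhanced rates $\norm{\tau(t)}_{L^1}\lesssim(1+t)^{-3/2}$, $\norm{\nabla_x\tau(t)}_{L^2}\lesssim(1+t)^{-3/2}$ --- and deliver the same rate. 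What yours buys is self-containedness: it needs only standard semigroup estimates and no differential-inequality bookkeeping. What the paper's buys is uniformity: the same splitting set $S(t)$ and the same style of pointwise frequency bounds are already set up for $\hat\omega$ and $\mathcal{F}(\nabla_x\tau)$, so the frequency-side computation is essentially free there, and it keeps the whole paper running on a single technique. One minor point to make explicit in your write-up: on the interval $[0,t/2]$ the smoothing bound degenerates as $t\to 0$, so you should state (as you implicitly do) that the Minkowski bound $\norm{v_i(t)}_{L^2}\le\int_0^\infty\norm{\nabla_x\cdot\tau(s)}_{L^2}\,ds<\infty$ covers, say, $t\le 1$, so that the final estimate reads $(1+t)^{-1}$ rather than $t^{-1}$.
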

\begin{proof}[proof of the Lemma]
Since $\norm{\nabla_x \tau (t) }_{L^2} \le \frac{C_{**}}{(t+1) ^{\frac{3}{2} } }, $ $\nabla_x \tau \in L^1 (0, +\infty; L^2 (\mathbb{R}^2 ) ^2 )$. Furthermore, by the same Fourier splitting technique,
\begin{equation}
\frac{d}{dt} \norm{v(t)}_{L^2} ^2 \le -\frac{2}{(t+1)} \norm{v(t)}_{L^2} ^2 + \frac{2}{(t+1)} \int_{S(t)} |\hat{v}(t) |^2 d\xi + \frac{C_{**} }{(t+1) ^3}.
\end{equation}
However, in the Fourier space
\begin{equation}
\begin{gathered}
\partial_t \hat{v} (t) = - \nu_1 |\xi|^2 \hat{v} (t) - i \xi \cdot \hat{\tau}, \\
\hat{v}(t) = e^{-\nu_1 |\xi|^2 t} \hat{u_0} - i \int_0 ^t \xi \cdot e^{-\nu_1 |\xi|^2 (t-s) } \hat{\tau} (s) ds,
\end{gathered}
\end{equation}
and since $|\hat{\tau}(s, \xi ) | \le \norm{\tau(s)}_{L^1} \le \frac{C_{**}}{(s+1) ^{\frac{3}{2}} }$, 
\begin{equation}
|\hat{v}(t, \xi) | \le \norm{u_0}_{L^1} +  C_{**} |\xi|, 
\end{equation}
then we have $\norm{v(t)}_{L^2}^2 \le \frac{C_{**} }{(t+1)}$, as desired.
\end{proof}

\section{Conclusion}

We proved that strong solutions of 2D diffusive Oldroyd-B systems in $\mathbb{R}^2$ decay over time at an algebraic rate, given that the initial data satisfy mild restrictions originating from the kinetic considerations.
\paragraph{Acknowledgements} The author is supported by Samsung scholarship. The author is very grateful to Prof. Peter Constantin for his kind support. The author thanks to Dr. Theo Drivas and Dr. Dario Vincenzi for helpful discussions.

\bibliographystyle{abbrv}
\bibliography{jl2}
\end{document}